\newcommand{\Norm}[1]{\left\|#1\right\|}
\newcommand{\BlackBox}{\rule{1.5ex}{1.5ex}}  
\newcommand{\qed}{\hfill\BlackBox\\[2mm]}
\newenvironment{proof}{\par\noindent{\bf Proof\ }}{\qed}
\newtheorem{lemma}{Lemma}[section]
\newtheorem{theorem}[lemma]{Theorem}
\newtheorem{proposition}[lemma]{Proposition}
\newtheorem{definition}[lemma]{Definition}
\DeclarePairedDelimiter\floor{\lfloor}{\rfloor}
\newcommand{\bE}{\mathbb{E}}
\newcommand{\bN}{\mathbb{N}}
\newcommand{\bP}{\mathbb{P}}
\newcommand{\R}{\mathbb{R}}
\newcommand{\bR}{\mathbb{R}}
\newcommand{\cC}{\mathcal{C}}
\newcommand{\cF}{\mathcal{F}}
\newcommand{\cG}{\mathcal{G}}
\newcommand{\cL}{\mathcal{L}}
\newcommand{\cO}{\mathcal{O}}
\newcommand{\cP}{\mathcal{P}}
\newcommand{\cT}{\mathcal{T}}
\newcommand{\cX}{\mathcal{X}}
\newcommand{\cY}{\mathcal{Y}}
\newcommand{\un}{\mathds{1}}
\newcommand{\bayes}{f^*}
\newcommand{\ERM}[1]{\widehat{f}_{#1}}
\newcommand{\ERMag}[1]{\widehat{f}^{\mathrm{ag}}_{#1}}
\newcommand{\ERMho}[1]{\widehat{f}^{\,\mathrm{ho}}_{#1}}
\newcommand{\ClassRule}{G}
\newcommand{\HO}[2]{\text{HO}_{#1}\left(#2\right)}
\newcommand{\ho}{\text{ho}}
\newcommand{\cRh}{\widehat{\mathcal{R}}} 
\DeclareMathOperator*{\argmin}{argmin}
\DeclareMathOperator*{\argmax}{argmax}
\DeclareMathOperator{\Var}{Var}
\title{%
Cross-validation improved by aggregation: Agghoo
}
\author{
Guillaume Maillard \\
Ecole Normale Sup\'erieure\\
Paris 75005, France  \\
\texttt{guillaume.maillard@ens.fr} \\
\and
Sylvain Arlot \\
Laboratoire de Math\'ematiques d'Orsay, \\
Univ. Paris-Sud, CNRS, Universit\'e Paris-Saclay, \\
91405 Orsay, France \\
\texttt{sylvain.arlot@math.u-psud.fr} \\
\and
Matthieu Lerasle \\
Laboratoire de Math\'ematiques d'Orsay, \\
Univ. Paris-Sud, CNRS, Universit\'e Paris-Saclay, \\
91405 Orsay, France \\
\texttt{matthieu.lerasle@math.u-psud.fr} \\
}
\begin{document}
\maketitle

\begin{abstract}
Cross-validation is widely used for selecting among a family of learning rules. 
This paper studies a related method, called aggregated hold-out (Agghoo), 
which mixes cross-validation with aggregation; 
Agghoo can also be related to bagging. 
According to numerical experiments, Agghoo can improve significantly 
cross-validation's prediction error, at the same computational cost; 
this makes it very promising as a general-purpose tool for prediction. 
We provide the first theoretical guarantees on Agghoo, 
in the supervised classification setting, 
ensuring that one can use it safely: 
at worse, Agghoo performs like the hold-out, up to a constant factor. 
We also prove a non-asymptotic oracle inequality, 
in binary classification under the margin condition, 
which is sharp enough to get (fast) minimax rates. 
%
%
%
\end{abstract}

\section{Introduction}

%
Machine learning rules almost always depend on some hyperparameters, 
whose choice has a strong impact on the final performance. 
For instance, nearest-neighbor rules  \citep{Bia_Dev:2015} 
depend on the number $k$ of neighbors 
and on some distance measure over the feature space. 
Kernel methods \citep{Sch_Smo:2001} require to choose an appropriate kernel. 
A third example, among many others, is given by regularized empirical risk minimization rules, 
such as support vector machines \citep{Ste_Chr:2008} or the Lasso \citep{Tib:1996,Buh_vdG:2011}, 
which all depend on some regularization parameter. 
More generally, the problem of choosing from data among a family of learning rules 
is central to machine learning, 
including model selection \citep{Bur_And:2002,Mas:2003:St-Flour} 
and when one hesitates between different kinds of rules ---for instance, 
support vector machines or random forests.

%
In supervised learning, 
cross-validation (CV) is a general, efficient and classical answer to this problem 
\citep{Arl_Cel:2010:surveyCV}. 
It relies on the idea of splitting data into a training sample 
---used for training a predictor with each rule in competition--- 
and a validation sample 
---used for assessing the performance of each predictor. 
This leads to an estimator of the risk 
---the hold-out estimator when data are split once, 
the CV estimator when an average is taken over several data splits---, 
which can be minimized for selecting among a family of competing rules.

%
A completely different strategy, called aggregation, 
is to \emph{combine} the predictors obtained with all candidate rules 
\citep{Nem:2000,Yan:2001,Tsy:2004}.  
%
%
A major interest of aggregation is that it builds a learning rule 
in a much larger set that the family of rules in competition; 
therefore, it can sometimes yield a better performance than 
the best of all rules. 
%
%
Aggregation is the keystone of ensemble methods \citep{Die:2000}, 
among which we can mention bagging \citep{Bre:1996a}, 
AdaBoost \citep{Fre_Sch:1997} 
and random forests \citep{Bre:2001,Bia_Sco:2016:TEST}.

\medbreak

%
This paper studies a mix of cross-validation and aggregation ideas, 
called \emph{aggregated hold-out} (Agghoo). 
Data are split several times; 
for each split, the hold-out selects one predictor; 
then, the predictors obtained with the different splits are aggregated. 
This procedure is as general as cross-validation 
and it has the same computational cost. 
Moreover, it seems to be folklore knowledge among practicioners that 
Agghoo (or its variants) 
performs better than CV for prediction. 
%
%
%
Yet, Agghoo has never been properly studied in the literature, even experimentally, 
to the best of our knowledge. 
The closest results we found \citep{Jun_Hu:2015,Jun:2016} 
study other procedures, called ACV and EKCV, 
and prove weaker theoretical results than ours; 
we explain in Section~\ref{sec.setting.agghoo} 
why Agghoo is more natural and should be preferred to ACV and EKCV in general. 

%
Because of the aggregation step, Agghoo is an ensemble method, 
and it is particularly close to subagging ---a variant of bagging where the bootstrap is replaced by subsampling--- 
\citep{Buh_Yu:2002}, 
since both combine subsampling with aggregation. 
However, Agghoo is \emph{not} subagging applied to the hold-out selected predictor, 
in which the subsample itself is split into training and validation samples; 
see Section~\ref{sec.setting.agghoo}. 
Therefore, we cannot apply previous results on subagging for analyzing Agghoo; new developments are required. 
Since bagging and subagging are well-known for their stabilizing effects \citep{Bre:1996a,Buh_Yu:2002},  
we can expect Agghoo to behave similarly; 
in particular, it should improve much the prediction performance of CV 
when the hold-out selected predictor is unstable. 
The simulation experiments of Section~\ref{Sec:SimusStud} confirm this intuition. 

\paragraph{Contributions} 
The purpose of this paper is to investigate both theoretical and practical performances of Agghoo. 
As a first step, we focus on supervised classification with the 0--1 loss. 
%
%
%
First, Section~\ref{sec:TheoGar} 
provides three theoretical results on the classification error of Agghoo. 
%
%
Proposition~\ref{Prop:AgBetterThanHO} is a general result 
---valid for any family of learning rules--- 
proving that the excess risk of Agghoo is smaller than the excess risk 
of hold-out selection, multiplied by the number of classes. 
This ensures that Agghoo can be used safely: 
one cannot loose too much by preferring Agghoo to the hold-out. 
%
%
We then focus on binary classification under the margin condition \citep{Mam_Tsy:1999}, 
which is well-known for allowing fast learning rates \citep{Aud_Tsy:2007,Lec:2007c,Aud:2009}. 
Theorem~\ref{hold_out} is a non-asymptotic oracle inequality for Agghoo, 
showing that Agghoo performs almost as well as the best possible selection rule (called the oracle). 
We illustrate the sharpness of this oracle inequality by considering the setting of 
\citet[Section~3]{Aud_Tsy:2007}:  
Theorem~\ref{minimax_ho} shows that Agghoo applied to a well-chosen family of classifiers 
yields a minimax-adaptive procedure ---hence, optimal in worst-case. 
%
%
%
%
Finally, Section~\ref{Sec:SimusStud} is a numerical study of the performance of Agghoo, 
on synthetic and real data sets, 
with two different kinds of selection problems. 
It illustrates that Agghoo actually performs much better than hold-out, 
and even better than CV ---provided its parameters are well-chosen. 
When choosing among decision trees, 
the prediction performance of Agghoo is much better than the one of CV 
---for the same computational cost---, 
which illustrates the strong interest of using Agghoo when the choice among 
competing learning rules is ``unstable''. 
Based upon our experiments, we also give in Section~\ref{Sec:SimusStud} 
some guidelines for choosing Agghoo's parameters: 
the training set size and the number of data splits.

\section{Setting and definitions} \label{sec.setting}


\subsection{Supervised classification} \label{sec.setting.classif}
Consider the supervised classification problem 
where a sample $(X_1,Y_1)$, $\ldots$, $(X_n,Y_n) \in \cX \times \cY$ is given, 
with $\cX$ any measurable space and $\mathcal{Y} = \{0, \ldots ,M-1\}$ for some integer $M\geq 2$. 
The goal is to build a classifier ---that is, a measurable map $f:\cX \to \cY$--- 
such that, for any new observation $(X,Y)$, $f(X)$ is a good prediction for $Y$. 
Throughout the paper, 
$(X_1, Y_1), \ldots, (X_n,Y_n), (X,Y)$ are assumed independent with the same distribution $P$. 
Let $\cF$ denote the set of classifiers. 
The quality of any classifier $f \in \cF$ is measured by its risk 
\[
R(f) = \bP_{(X,Y)\sim P} \bigl( Y\neq f(X) \bigr)
\]
and any optimal classifier 
$ \bayes\in \argmin_{f\in \cF}R(f)$ 
is called a Bayes classifier \citep{Dev_Gyo_Lug:1996}. 
We also define the excess risk of $f \in \cF$ by 
$\ell(\bayes,f)=R(f)-R(\bayes)=R(f)-\inf_{f\in \cF}R(f) \geq 0$. 

Since we only have access to $P$ through the data $D_n := (X_i,Y_i)_{1 \leq i \leq n}$, 
a classifier is built from data, 
thanks to a \emph{classification rule} 
$\ClassRule:\cup_{k\ge 1} (\cX \times \cY)^k \to \cF$, 
which maps a sample (of any size) into a classifier. 
%

\subsection{Selection of classification rules by cross-validation} \label{sec.setting.selec}

%
A generic situation is when a family $\cG$ of classification rules is given, 
and not only a single rule, 
so that we have to select one of them 
---or to combine their outputs. 
%
%
For instance, we can consider the family 
$(G_k^{\mathrm{NN}})_{k \geq 1}$ of nearest neighbors classifiers 
when $\cX$ is a metric space ---the parameter $k$ denoting the number of neighbors---, 
or the family $(G_{\lambda}^{\mathrm{SVM}})_{\lambda \in [0,+\infty)}$ 
of support vector machine classifiers 
for a given kernel on $\cX$ ---$\lambda$ denoting the regularization parameter. 

\medbreak

%
A common answer to this problem is to \emph{select} one of these rules from data, 
and cross-validation methods are a general tool for doing so. 
Let us briefly recall these methods here; 
we refer the reader to the survey by \citet{Arl_Cel:2010:surveyCV} for details and references, 
and to the paper by \citet{Arl_Ler:2012:penVF:JMLR} for the most recent results.

%
For any non-empty $B \subset \{1, \ldots, n\}$, 
define the corresponding sample and empirical risk by 
\[ 
D_n^B := (X_i,Y_i)_{i\in B}
\qquad \text{and} \qquad 
\forall f \in \cF, \qquad 
\cRh_B(f) := \frac{1}{|B|} \sum_{i \in B} \un_{\{f(X_i) \neq Y_i\}} 
\, , 
\]
respectively. 
Then, for any classification rule $\ClassRule$, 
$\ERM{\ClassRule,B} := \ClassRule(D_n^B)$ is the (random) classifier 
obtained by training $\ClassRule$ on the subsample of $D_n$ indexed by $B$. 

%
Given a non-empty subset $T$ of $\{1, \ldots, n\}$ 
and some classification rule $\ClassRule$, 
the hold-out estimator of the risk of $\ClassRule$ is defined by 
$\HO{T}{\ClassRule} := \cRh_{T^{c}} (\ERM{\ClassRule,T})$. 
A classifier $\ERM{G,T}$ is built from the \emph{training sample} $D_n^T$, 
then its quality is assessed on the \emph{validation sample} $D_n^{T^c}$. 
Note that $\HO{T}{\ClassRule}$ depends on the sample $D_n$ 
even if this does not appear in the notation. 
Then, hold-out can be used for \emph{selecting} one rule among $\cG$, 
by minimizing $\HO{T}{\ClassRule}$ over $\ClassRule \in \cG$:  
\begin{equation}\label{def:ERMho}
\ERMho{\cG,T} 
:= \widehat{G}^{\ho}_{\cG,T}(D_n^T),
\qquad \text{where}\qquad 
\widehat{G}^{\ho}_{\cG,T} \in \argmin_{G\in \cG}\HO{T}{G}
\enspace .
\end{equation}
We call $\ERMho{\cG,T}$ the \emph{hold-out classifier}. 

%
Hold-out depends on the arbitrary choice of a training set $T$, 
and is known to be quite unstable, 
despite its good theoretical properties \citep[Section~8.5.1]{Mas:2003:St-Flour}. 
%
Therefore, practicioners often prefer to use cross-validation instead, 
which considers several training sets. 
%
%
Given a family $\cT = \{T_1, \ldots, T_V \}$ of training sets, 
the cross-validation risk estimator of any classification rule $\ClassRule$ 
is defined by 
\[
\text{CV}_{\cT}(G) = \frac{1}{V}\sum_{j=1}^V \HO{T_j}{G}
\enspace ,
\]
leading to the \emph{cross-validation classifier} 
\begin{equation}\label{def:CVEst}
\ERM{\cG, \cT}^{\mathrm{cv}} 
:= \widehat{G}^{\mathrm{cv}}_{\cG,\cT}(D_n) 
\qquad \text{where} \qquad 
\widehat{G}^{\mathrm{cv}}_{\cG,\cT}\in\argmin_{G\in \cG}\text{CV}_{\cT}(G)
\enspace .
\end{equation}
%
%
Depending on how $\cT$ is chosen, this can lead to 
leave-one-out, leave-$p$-out, $V$-fold cross-validation 
or Monte-Carlo cross-validation, among others \citep{Arl_Cel:2010:surveyCV}. 

\subsection{Aggregated hold-out (Agghoo)} \label{sec.setting.agghoo}
%
%
In this paper, we study another way to improve on the stability of hold-out classifiers, 
by \emph{aggregating} the hold-out classifiers $\ERMho{\cG,T_j}$ obtained from several training sets $T_1, \ldots, T_V$. 
Formally, the \emph{aggregated hold-out classifer} 
(Agghoo) is obtained by making a \emph{majority vote} among them: 
\begin{equation}\label{def:estim}
\forall x \in \cX, \qquad 
\ERMag{\cG, \cT}(x) \in \argmax_{y\in\cY} \frac{1}{V}\sum_{j=1}^V \un_{\left\{\ERMho{\cG,T_j}(x)=y \right\}}
\enspace. 
\end{equation}

%
Compared to cross-validation classifiers defined by Eq.~\eqref{def:CVEst}, 
we reverse the order between aggregation (majority vote or averaging) and minimization of the risk estimator. 
%
%
%
To the best of our knowledge, Agghoo has not appeared before in the literature. 
%
%
The closest procedures we found 
are ``$K$-fold averaging cross-validation'' (ACV) proposed by \citet{Jun_Hu:2015} 
for linear regression, 
and ``efficient $K$-fold cross-validation'' (EKCV) proposed by \citet{Jun:2016} 
for high-dimensional regression. 
The main difference with Agghoo is that ACV and EKCV average the chosen \emph{parameters} 
---the models for ACV, the regularization parameters for EKCV---, 
whereas Agghoo averages the chosen \emph{classifiers}. 
%
This leads to completely different procedures for learning rules that are not linear functions 
of their parameters. 
Even in the case of linear regression with ACV, 
where the estimator is a linear function of the projection matrix onto the model 
---which is the ``parameter'' averaged by ACV---, 
Agghoo would lead to a different procedure since it averages the 
$\widehat{G}^{\ho}_{\cG,T_j}(D_n^{T_j})$ 
while ACV averages the 
$\widehat{G}^{\ho}_{\cG,T_j}(D_n)$. 
In the general case, and in particular for classification, 
averaging the classifiers $\widehat{G}^{\ho}_{\cG,T_j}(D_n^{T_j})$, 
which have been selected for their good performance on the validation set $T_j^c$, 
is more natural than averaging the 
$\widehat{G}^{\ho}_{\cG,T_j}(D_n)$ whose performance has not been assessed on independent data. 

%
Another related procedure ---not explicitly studied in the literature--- is subagging applied 
to hold-out selection 
$D_n \mapsto \widehat{G}^{\ho}_{\cG,T}(D_n^T)$,
as defined by Eq.~\eqref{def:ERMho}. 
Denoting by $T_1, \ldots, T_V$ the subsample sets chosen by the subagging step, 
and by $T'_j$ the training set used by the hold-out applied to the subsample 
$D_n^{T_j}$, 
the subagged hold-out predictor is obtained by making a majority vote among 
$\ERM{\widehat{G}^{\ho}_{\cG,T'_j} (D_n^{T_j})} (D_n^{T_j})  
=  \widehat{G}^{\ho}_{\cG,T'_j} (D_n^{T_j})$. 
%
Compared to Agghoo, the main difference is that this aggregates predictors 
trained on a \emph{subsample} of $D_n^{T_j}$ ---instead of the entire~$D_n^{T_j}$.

\section{Theoretical guarantees}\label{sec:TheoGar}



In this section, we make the following two assumptions on the training sets: 
\begin{gather}
\label{hyp.T-ind}
\tag{H1}
T_1, \ldots, T_V \subset \{1, \ldots, n\} 
\text{ are independent of } 
D_n = (X_i,Y_i)_{1 \leq i \leq n} 
\, , 
\\
\label{hyp.T-p}
\tag{H2}
|T_1| = \cdots = |T_V| = n-p 
\in \{1, \ldots, n-1\}
\, . 
\end{gather}
%
%
These assumptions are satisfied for the most classical cross-validation methods, 
including leave-$p$-out, $V$-fold cross-validation (with $p=n/V$) and 
Monte-Carlo cross-validation \citep{Arl_Cel:2010:surveyCV}. 
%
%

\paragraph{General bound} 
Our first theoretical result connects the performances of Agghoo and hold-out.
\begin{proposition}\label{Prop:AgBetterThanHO}
Let $\cG$ denote a collection of classification rules and $\mathcal{T}$ a collection of training sets 
satisfying \eqref{hyp.T-ind}--\eqref{hyp.T-p}. 
The aggregated hold-out estimator $\ERMag{\cG, \cT}$ defined by Eq.~\eqref{def:estim} 
and the hold-out estimator defined by Eq.~\eqref{def:ERMho} satisfy: 
\[ 
\bE \bigl[ \ell(\bayes,\ERMag{\cG, \cT}) \bigr] 
\leq M \bE \bigl[ \ell(\bayes,\ERMho{\cG, T_1}) \bigr] 
\qquad \text{and} \qquad 
\bE \bigl[ R(\ERMag{\cG, \cT}) \bigr] 
\leq 2 \bE\bigl[ R(\ERMho{\cG, T_1}) \bigr] 
\enspace .
\]
\end{proposition}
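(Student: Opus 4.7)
The overall plan is to prove both inequalities pointwise in $x$ (and in $(x,y)$ for the second) and then integrate, using \eqref{hyp.T-ind}--\eqref{hyp.T-p} only at the very end to collapse the sum over $j$: since the training sets $T_j$ have the same size $n-p$ and are independent of the i.i.d.\ sample $D_n$, each $\ERMho{\cG,T_j}$ has the same distribution as $\ERMho{\cG,T_1}$. Writing $\widehat{y}^\star(x) := \ERMag{\cG,\cT}(x)$ and $\widehat{y}_j(x) := \ERMho{\cG,T_j}(x)$, both bounds will come from elementary combinatorial counts on the $V$ votes $\widehat{y}_1(x), \ldots, \widehat{y}_V(x)$ distributed over $M$ classes.

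For the excess-risk bound, I would introduce the regression function $\eta_y(x) := \bP(Y=y \mid X=x)$ and the pointwise gap $\Delta(x,y) := \eta_{\bayes(x)}(x) - \eta_y(x) \ge 0$, so that conditionally on $D_n$ and $\cT$ one has $\ell(\bayes, f) = \bE_X[\Delta(X, f(X))]$ for any data-dependent classifier $f$. Fix $x$. By the pigeonhole principle, the plurality class $\widehat{y}^\star(x)$ receives at least $V/M$ votes among the $\widehat{y}_j(x)$, and each such vote $j$ satisfies $\widehat{y}_j(x) = \widehat{y}^\star(x)$, hence $\Delta(x, \widehat{y}_j(x)) = \Delta(x, \widehat{y}^\star(x))$; consequently
\[
\frac{1}{V}\sum_{j=1}^V \Delta\bigl(x, \widehat{y}_j(x)\bigr)
\;\geq\; \frac{1}{V}\, |\{j : \widehat{y}_j(x) = \widehat{y}^\star(x)\}|\cdot \Delta\bigl(x,\widehat{y}^\star(x)\bigr)
\;\geq\; \frac{1}{M}\,\Delta\bigl(x,\widehat{y}^\star(x)\bigr).
\]
Integrating over $X$ gives $\ell(\bayes, \ERMag{\cG,\cT}) \leq (M/V) \sum_j \ell(\bayes, \ERMho{\cG,T_j})$, and a final expectation, combined with the identical distribution of the $\ERMho{\cG,T_j}$, yields the first inequality.

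For the risk bound, I would use the standard ``plurality cannot be too wrong'' count. Fix $(x,y)$ and suppose $\widehat{y}^\star(x) \neq y$; let $N_y := |\{j : \widehat{y}_j(x) = y\}|$ and $N^\star := |\{j : \widehat{y}_j(x) = \widehat{y}^\star(x)\}|$. The plurality rule forces $N^\star \geq N_y$, and disjointness gives $N^\star + N_y \leq V$, so $N_y \leq V/2$; thus at least $V/2$ of the classifiers misclassify $(x,y)$. Hence in all cases
\[
\un_{\{\widehat{y}^\star(x) \neq y\}} \;\leq\; \frac{2}{V} \sum_{j=1}^V \un_{\{\widehat{y}_j(x) \neq y\}},
\]
and integrating against $P$ followed by the outer expectation gives $\bE[R(\ERMag{\cG,\cT})] \leq 2\, \bE[R(\ERMho{\cG,T_1})]$. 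I do not expect any real obstacle: the only subtleties are the measurable tie-breaking hidden in the $\argmax$ defining $\ERMag{\cG,\cT}$, and the equality $\bE[\ell(\bayes, \ERMho{\cG,T_j})] = \bE[\ell(\bayes, \ERMho{\cG,T_1})]$, which follows from \eqref{hyp.T-ind}--\eqref{hyp.T-p} together with exchangeability of the i.i.d.\ sample $D_n$. The constants $M$ and $2$ emerge tightly from the two counts, pigeonhole over $M$ classes versus $N_y \leq V/2$.
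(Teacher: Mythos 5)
Your proposal is correct and follows essentially the same route as the paper's own proof: the paper also reduces both bounds to a pointwise convexity-type property of the majority vote, using the identity $\ell(\bayes,f)=\bE\bigl[\eta_{\bayes(X)}(X)-\eta_{f(X)}(X)\bigr]$ together with the pigeonhole bound $C_x \geq V/M$ for the first inequality, and the count $\lvert\mathcal{C}_x(y)\rvert \leq V/2$ whenever the plurality vote differs from $y$ for the second, before invoking \eqref{hyp.T-ind}--\eqref{hyp.T-p} to replace the average over $j$ by the term for $T_1$. No gaps.
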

Proposition~\ref{Prop:AgBetterThanHO} is proved in supplementary material. 

Up to the multiplicative constant $M$, 
the Agghoo predictor $\ERMag{\cG, \cT}$ performs theoretically at least as well as the hold-out selected estimator. 
When the number of classes $M$ is small ---for instance, in the binary case---, 
this guarantees that Agghoo's prediction error 
cannot be too large. 
Let us now use this general bound to obtain oracle and minimax properties for Agghoo.


%
\paragraph{Oracle inequality in binary classification} 
From now on, we focus on the binary classification case, that is, $M=2$. 
A classical assumption in this setting is the so-called \emph{margin assumption} 
\citep{Mam_Tsy:1999}, for some $\beta \geq 0$ and $c\ge 1$: 
\begin{equation}\label{hyp.MA} \tag{MA}
\forall h>0, \qquad 
\mathbb{P} \bigl( \bigl\lvert 2\eta(X) - 1 \bigr\rvert \leq h \bigr) \leq c h^{\beta}
\qquad \text{where} \qquad 
\eta(X) := \mathbb{P} (Y = 1 | X ) 
\enspace ;  
\end{equation}
$\eta$ is called the regression function. 
Agghoo satisfies the following non-asymptotic oracle inequality under \eqref{hyp.MA}. 
\begin{theorem}
\label{hold_out}
Let $\cG$ denote a collection of classification rules and $\mathcal{T}$ a collection of training sets 
satisfying \eqref{hyp.T-ind}--\eqref{hyp.T-p}. 
If $\beta \geq 0$ and $c \ge 1$ exist such 
that \eqref{hyp.MA} holds true, 
we have: 
\[ 
\bE \bigl[ \ell(\bayes,\ERMag{\cG, \cT}) \bigr] 
\leq 3 \bE \left[ \inf_{G \in \cG}\ell(\bayes,\ERM{G,T_1}) \right] 
 + \frac{ 29 c^{\frac{1}{\beta + 2}}  \log(e|\cG|)}{p^{\frac{\beta + 1}{\beta + 2}}}  
 \enspace.
\]
\end{theorem}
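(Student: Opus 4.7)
The plan is to reduce the theorem to an oracle inequality for the single-split hold-out $\ERMho{\cG,T_1}$ and then apply Proposition~\ref{Prop:AgBetterThanHO} with $M=2$. Specifically, my intermediate target would be
\[
\bE\bigl[\ell(\bayes,\ERMho{\cG,T_1})\bigr]
\leq \tfrac{3}{2}\, \bE\Bigl[\inf_{G\in\cG} \ell(\bayes,\ERM{G,T_1})\Bigr] + K\, c^{1/(\beta+2)}\, \log(e|\cG|)/p^{(\beta+1)/(\beta+2)},
\]
which, when multiplied by $M=2$ through the Proposition, yields both the factor~$3$ and a remainder constant at most~$2K$, to be chased to fit inside~$29$.

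I would work conditionally on $D_n^{T_1}$, so that $\{f_G := \ERM{G,T_1}\}_{G \in \cG}$ is a deterministic finite family of classifiers and the validation sample $D_n^{T_1^c}$ of size $p$ is independent of it. For each $G$, I would apply Bernstein's inequality to the centered empirical loss $\widehat{\phi}_G := \cRh_{T_1^c}(f_G) - \cRh_{T_1^c}(\bayes)$, whose mean is $\ell(\bayes,f_G)$, whose range is in $[-1,1]$, and whose conditional variance is bounded by $\bP(f_G(X) \neq \bayes(X) \mid D_n^{T_1})$. The margin assumption~\eqref{hyp.MA}, via the elementary inequality $\bP(f \neq \bayes) \leq \ell(\bayes,f)/h + c h^\beta$ optimized in $h > 0$, gives $\sigma_G^2 \leq C_\beta\, c^{1/(\beta+1)}\, \ell(\bayes, f_G)^{\beta/(\beta+1)}$; a union bound over $G \in \cG$ at level $s := t + \log|\cG|$ then controls all $G$'s simultaneously with probability at least $1 - 2 e^{-t}$.

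On the good event, the hold-out optimality $\widehat{\phi}_{\widehat{G}} \leq \widehat{\phi}_{G^*}$ (with $G^* \in \argmin_{G}\ell(\bayes,f_G)$) combined with Bernstein tails at $\widehat{G}$ and at $G^*$ yields
\[
\ell(\bayes, f_{\widehat{G}}) \leq \ell(\bayes, f_{G^*}) + A\sqrt{c^{1/(\beta+1)} s/p}\,\bigl[\ell(\bayes, f_{\widehat{G}})^{\beta/(2\beta+2)} + \ell(\bayes, f_{G^*})^{\beta/(2\beta+2)}\bigr] + B s/p.
\]
The key manipulation is Young's inequality with conjugate exponents $q_1 = 2(\beta+1)/\beta$ and $q_2 = 2(\beta+1)/(\beta+2)$, chosen so that $\ell^{\beta q_1 /(2\beta+2)} = \ell$: each $\sqrt{\cdot}$ term is bounded by $\epsilon\, \ell + C_\epsilon\, c^{1/(\beta+2)} (s/p)^{(\beta+1)/(\beta+2)}$, which is precisely where the exponents $1/(\beta+2)$ on $c$ and $(\beta+1)/(\beta+2)$ on $p$ emerge. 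Taking $\epsilon = 1/5$ ensures $(1+\epsilon)/(1-\epsilon) = 3/2$, absorbs the $\ell(\bayes, f_{\widehat{G}})$ term on the left, and delivers the target $3/2$-oracle inequality in high probability.

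Finally, I would integrate the tail in $t$: with $\gamma := (\beta+1)/(\beta+2) \leq 1$, subadditivity of $u \mapsto u^\gamma$ and $\int_0^\infty \gamma u^{\gamma-1} e^{-u} du = \Gamma(1+\gamma) \leq 1$ turn the per-confidence remainder into $K c^{1/(\beta+2)} [(\log|\cG|)^\gamma + 1]/p^\gamma$, and one checks case-by-case in $|\cG|$ that $(\log|\cG|)^\gamma + 1 \leq 2 \log(e|\cG|)$. Taking expectation over $D_n^{T_1}$ and applying Proposition~\ref{Prop:AgBetterThanHO} then completes the argument. The main obstacle is a constant chase: tracking the Bernstein prefactor, the $\beta$-dependent factor $C_\beta$ from the margin-to-variance reduction, and Young's constant $C_\epsilon$ so that the final numerical remainder fits inside~$29$; a minor subtlety is obtaining the $\log(e|\cG|)$ remainder \emph{to the first power} from the a priori $(\log|\cG|)^\gamma$-shaped tail.
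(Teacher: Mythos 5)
Your overall strategy coincides with the paper's: reduce to a conditional oracle inequality for the single hold-out $\ERMho{\cG,T_1}$, multiply by $M=2$ via Proposition~\ref{Prop:AgBetterThanHO}, and derive the variance bound $\Var\bigl(\un_{\{f(X)\neq Y\}}-\un_{\{\bayes(X)\neq Y\}}\bigr)\leq \bP(f(X)\neq \bayes(X))\leq C_\beta\, c^{1/(\beta+1)}\ell(\bayes,f)^{\beta/(\beta+1)}$ from \eqref{hyp.MA} exactly as the paper does. Your Young-inequality balancing with conjugate exponent $2(\beta+1)/(\beta+2)$ reproduces precisely the paper's fixed point $w(\delta_*)=\sqrt{p}\,\delta_*^2$, i.e.\ $\delta_*^2=(c_1/p)^{(\beta+1)/(\beta+2)}$, so the exponents $1/(\beta+2)$ on $c$ and $(\beta+1)/(\beta+2)$ on $p$ come out correctly, and $\epsilon=\theta=1/5$ gives the leading factor $2\cdot\frac{1+\theta}{1-\theta}=3$ in both arguments. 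The only real divergence is that where you run Bernstein plus a union bound plus Young by hand, the paper simply invokes Massart's Eq.~(8.60) (a consequence of his Corollary~8.8) as a packaged oracle inequality for hold-out selection under condition \eqref{eq.hyp-Eq-8.60-Massart}.

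That substitution is where I would flag a genuine risk rather than a routine ``constant chase''. The paper's remainder is already essentially saturated: with $\theta=1/5$, Massart's bound yields $\frac{2}{1-\theta}\bigl[2\theta+\log(e|\cG|)\bigl(\tfrac13+\theta^{-1}\bigr)\bigr]c^{1/(\beta+2)}p^{-(\beta+1)/(\beta+2)}$, which after the factor $2$ from Proposition~\ref{Prop:AgBetterThanHO} lands near $28.7$ --- there is almost no slack below $29$. A naive execution of your plan (one-sided Bernstein at $\widehat{G}$ and at $G^*$, two separate Young absorptions each inflating the remainder by a factor of order $2^{q_2}(q_1\epsilon)^{-q_2/q_1}$, then a factor $2$ from $(\log|\cG|)^{(\beta+1)/(\beta+2)}+\Gamma\bigl(1+\tfrac{\beta+1}{\beta+2}\bigr)\leq 2\log(e|\cG|)$ and another factor $2$ from the Proposition) gives, by a rough accounting, a constant closer to $100$ than to $29$. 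So your argument proves the theorem with a larger numerical constant --- which the paper itself says is negotiable --- but to obtain the literal statement you should either import Massart's Eq.~(8.60) as the paper does, or sharpen the concentration step, e.g.\ by absorbing both deviation terms into $\ell(\bayes,\ERM{\widehat{G},T_1})$ using $\ell(\bayes,\ERM{G^*,T_1})\leq\ell(\bayes,\ERM{\widehat{G},T_1})$ and by optimizing the split between the Bernstein and Young parameters. Everything else --- the variance reduction, the exponent bookkeeping, the $3/2\mapsto 3$ step, the tail integration and the passage from $(\log|\cG|)^{(\beta+1)/(\beta+2)}$ to $\log(e|\cG|)$ --- is sound.
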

Theorem~\ref{hold_out} is proved in supplementary material. 
Note that the constant $3$ in front of the oracle excess risk 
$\bE [ \inf_{G \in \mathcal{G}}\ell(\bayes,\ERM{G,T_1}) ] $ 
can be made as close as desired to~$2$, at the price of enlarging the 
constant $29$ in the remainder term. 

Theorem~\ref{hold_out} shows that Agghoo 
performs as well as the best classification rule in the collection $\cG$ trained with ${n-p}$ data 
(called the oracle classifier), 
provided that the remainder term 
$\cO(\log(e|\cG|)/p^{\frac{\beta + 1}{\beta + 2}})$ can be neglected. 
This is a strong result, since it shows that Agghoo attains the same learning rates 
as the oracle classifier. 
In comparison, the results proved by \citet{Jun_Hu:2015} on ACV are much weaker, 
since they do not allow to derive any rate for ACV, except in the ``parametric'' setting. 


\paragraph{Minimax rates} 
We now address the main possible limitation of Theorem~\ref{hold_out}: 
is the remainder term 
negligible in front of the oracle excess risk in some ``interesting'' frameworks? 
To this end, we consider the setting of \citet[Section~3]{Aud_Tsy:2007}, 
which is an example where fast minimax rates are known under the margin assumption. 
Let us briefly describe this setting. 
For any $\gamma, L>0$, let $\Sigma_{\gamma,L}$ denote the class of 
functions with $\gamma$-H\"older semi-norm smaller than $L$ 
(a formal definition is given in supplementary material). 
The classes of probability distributions that we'll be interested in is defined as follows.
\begin{definition}
For any $\gamma, L > 0$, $\beta \geq 0$ and $c \ge 1$, 
$\mathcal{P}_{\gamma,L,\beta,c}$ denotes the set of probability distributions
$P$ on $\mathbb{R}^d \times \{0 , 1\}$ such that 
\eqref{hyp.MA} holds true ---with parameters $\beta,c$---, 
the regression function $\eta \in \Sigma_{\gamma,L}(\bR^d)$ 
and $X$ has a density $q$ with respect to the Lebesgue measure satisfying: 
\[
\frac{1}{L} \un_{[0;1]^d} 
\leq q(X) 
\leq L \un_{[0;1]^d} 
\qquad \text{almost surely.} 
\]
\end{definition}
By \citet[Theorem 3.5]{Aud_Tsy:2007}, the minimax rate of convergence over 
$\mathcal{P}_{\gamma,L,\beta,c}$ is $n^{- \frac{\gamma (1 + \beta)}{2\gamma + d}}$ 
if $\gamma \beta \leq d$; 
this result is recalled in supplementary material. 
When $\gamma \beta < d$, we have $\gamma (1 + \beta)/(2\gamma + d)<(\beta + 1)/(\beta + 2)$, 
so the remainder term in Theorem~\ref{hold_out} can be neglected in front of 
the minimax rate of convergence over $\mathcal{P}_{\gamma,L,\beta,c}$, 
if $\cG$ is a \emph{polynomial collection} ---that is, $|\cG| \leq n^{\alpha}$ for some $\alpha \geq 0$---  
and if $p \geq \delta n$ with $\delta >0$. 

As detailed by \citet[Section~3]{Aud_Tsy:2007}, having $\gamma \beta > d$ is a strong constraint on 
$P$, hence we can leave it aside in the following without losing too much. 
%
We can also remark that when $L=+\infty$ ---which removes the condition on the density $q$ of $X$---,  \citet[Section~4]{Aud_Tsy:2007} show that the minimax rate then is 
$n^{- \frac{\gamma (1 + \beta)}{\gamma(2 + \beta) + d}}$ 
---even when $\gamma \beta \geq d$---, 
and this rate is always larger than the remainder term in Theorem~\ref{hold_out} 
if $\cG$ and $p$ are taken as previously. 

%
Let us now explain how Agghoo can be used for building a minimax classifier,  
simultaneously over all classes $\mathcal{P}_{\gamma,L,\beta,c}$ such that $\gamma \beta < d$. 
The construction relies on a family of classification rules proposed by 
\citet[Definition~2.3]{Aud_Tsy:2007}. 
For any $\ell \geq 1$ and $h > 0$, 
$G^{\mathrm{LP}}_{\ell,h}$ is a plug-in classification rule 
based upon a local-polynomial Gaussian kernel estimator 
$\widehat{Q}_{\ell,h}$ of $\eta$, 
where $\ell$ is the polynomial degree 
and $h$ is the kernel bandwidth. 
Then, $G^{\mathrm{LP}}_{\floor{\gamma},h_n(\gamma,d)}$ is minimax over $\mathcal{P}_{\gamma,L,\beta,c}$, 
where $h_n(\gamma,d)$ is a well-chosen bandwidth. 
%
%
This result, and the full definition of $G^{\mathrm{LP}}_{\ell,h}$, 
are recalled in supplementary material. 
%
Its main limitation is that the minimax rule depends on $\gamma$ 
which is unknown; 
in other words, it is not \emph{adaptive} to the smoothness $\gamma$ 
of the regression function. 
The next theorem shows that Agghoo is a simple way 
to get rid of this issue. 
\begin{theorem}
\label{minimax_ho}
Let $\tau \in (0,1)$ be fixed.  
For any $n \geq 1$, let 
$\mathcal{T}_n=\{T_1,\ldots,T_V\}$ be a collection of training sets 
satisfying \eqref{hyp.T-ind}--\eqref{hyp.T-p} and $|T_1|=\floor{\tau n}$, 
for some $V \geq 1$ (possibly depending on $n$), 
and let us define the collection 
\[ 
\cG^{\mathrm{LP}}_n 
:= \left( G^{\mathrm{LP}}_{\ell,\frac{1}{k}} \right)_{1 \leq \ell \leq n, 1 \leq k \leq n} 
\]
of classification rules, 
where $G^{\mathrm{LP}}_{l,h}$ are defined above. 
Then, the Agghoo classifier $\ERMag{\cG^{\mathrm{LP}}_n , \cT_n}$ defined by Eq.~\eqref{def:estim} 
is minimax over classes $\mathcal{P}_{\gamma,L,\beta,c}$ 
simultaneously for all $\gamma,L,\beta,c$ such that $\gamma \beta < d$. 
\end{theorem}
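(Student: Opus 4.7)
\textbf{Proof plan for Theorem~\ref{minimax_ho}.}

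The plan is to apply the oracle inequality of Theorem~\ref{hold_out} to the collection $\cG^{\mathrm{LP}}_n$ and training sets $\mathcal{T}_n$, and then to bound the resulting oracle excess risk by showing that $\cG^{\mathrm{LP}}_n$ contains a discretized version of the Audibert--Tsybakov minimax rule for each $(\gamma,L,\beta,c)$ with $\gamma\beta<d$.

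First, I would fix $P \in \mathcal{P}_{\gamma,L,\beta,c}$ and note that since $|T_1|=\floor{\tau n}$, we have $p = n - \floor{\tau n} \asymp n$. The hypotheses \eqref{hyp.T-ind}--\eqref{hyp.T-p} and \eqref{hyp.MA} hold by construction, so Theorem~\ref{hold_out} applied to $\cG^{\mathrm{LP}}_n$ yields
\[
\bE\bigl[\ell(\bayes,\ERMag{\cG^{\mathrm{LP}}_n,\mathcal{T}_n})\bigr]
\leq 3\,\bE\!\left[\inf_{G\in \cG^{\mathrm{LP}}_n}\ell(\bayes,\ERM{G,T_1})\right]
+ \frac{29\, c^{1/(\beta+2)}\log(e|\cG^{\mathrm{LP}}_n|)}{p^{(\beta+1)/(\beta+2)}}\,.
\]
Since $|\cG^{\mathrm{LP}}_n|\leq n^{2}$, the remainder term is of order $(\log n)/n^{(\beta+1)/(\beta+2)}$.

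Next I would bound the oracle excess risk. By \citet[Theorem~3.2]{Aud_Tsy:2007} (recalled in the supplementary material), the rule $G^{\mathrm{LP}}_{\floor{\gamma},h_m(\gamma,d)}$ trained on $m$ i.i.d.\ observations from $P \in \mathcal{P}_{\gamma,L,\beta,c}$ achieves excess risk bounded by a constant (depending on $\gamma,L,\beta,c$) times $m^{-\gamma(1+\beta)/(2\gamma+d)}$, for a prescribed bandwidth $h_m(\gamma,d)\asymp m^{-1/(2\gamma+d)}$. Setting $m=\floor{\tau n}$, the integer $\ell=\floor{\gamma}$ lies in $\{1,\dots,n\}$ for $n$ large enough, and since $h_m(\gamma,d)$ is bounded below by $m^{-1/d}\geq 1/n$ and bounded above by a constant, there exists $k\in\{1,\dots,n\}$ with $1/k \in [h_m(\gamma,d)/2,\, h_m(\gamma,d)]$. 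Inspecting the Audibert--Tsybakov bound, replacing $h_m(\gamma,d)$ by a bandwidth of the same order changes the risk bound only by a constant factor; hence
\[
\bE\!\left[\inf_{G\in \cG^{\mathrm{LP}}_n}\ell(\bayes,\ERM{G,T_1})\right]
\leq \bE\bigl[\ell(\bayes,\ERM{G^{\mathrm{LP}}_{\floor{\gamma},1/k},T_1})\bigr]
\leq C(\gamma,L,\beta,c)\,n^{-\gamma(1+\beta)/(2\gamma+d)}\,.
\]

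Finally, to conclude the minimax statement, I would compare the two terms. The assumption $\gamma\beta<d$ is exactly what gives $\gamma(1+\beta)/(2\gamma+d) < (\beta+1)/(\beta+2)$, so the remainder $(\log n)/n^{(\beta+1)/(\beta+2)}$ is asymptotically negligible compared to the minimax rate $n^{-\gamma(1+\beta)/(2\gamma+d)}$; combining the two bounds yields the minimax rate uniformly over $\mathcal{P}_{\gamma,L,\beta,c}$, and since $(\gamma,L,\beta,c)$ did not enter the definition of $\cG^{\mathrm{LP}}_n$ or $\mathcal{T}_n$, the estimator is simultaneously minimax. The main obstacle is the discretization step: one must check that replacing the continuous optimal bandwidth $h_m(\gamma,d)$ by $1/k$ for some $k\leq n$ does not degrade the constant in the Audibert--Tsybakov bound by more than a factor independent of $n$, which requires revisiting their proof to verify that the bias and variance estimates degrade only polynomially in $h_m(\gamma,d)/(1/k)$; everything else is essentially plugging in the bounds.
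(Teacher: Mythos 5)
Your proposal is correct and follows essentially the same route as the paper's proof: apply Theorem~\ref{hold_out} to $\cG^{\mathrm{LP}}_n$, bound the oracle term by the risk of $G^{\mathrm{LP}}_{\floor{\gamma},1/k}$ for a suitable $k \leq n$ (the paper takes $k_n = \floor{n^{1/(2\gamma+d)}}$ and verifies $1 \leq n^{1/(2\gamma+d)}h_n \leq 2$, which is exactly your discretization step; its supplementary Theorem on minimaxity is stated precisely so that any bandwidth within a constant factor of the optimal one works), and use $\gamma\beta < d$ to make the $(\log n)/n^{(\beta+1)/(\beta+2)}$ remainder negligible. The one point you flag as an obstacle --- robustness of the Audibert--Tsybakov bound to constant-factor bandwidth changes --- is indeed the technical content the paper isolates and proves via their Eq.~(3.7), so your plan is complete.
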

Theorem~\ref{minimax_ho} is proved in supplementary material. 
It shows that Agghoo applied to a well-chosen collection of 
local polynomial estimators defined by \citet{Aud_Tsy:2007} 
yields a \emph{minimax adaptive} classifier. 
Let us emphasize that minimax adaptivity is a strong theoretical property.  

Of course, Agghoo is not the only way to obtain such an adaptivity result 
---for instance, hold-out selection is sufficient to get it, 
as can be seen by taking $V=1$ in Theorem~\ref{minimax_ho}---, 
even if we have not found such a minimax-adaptive statement for this problem in the literature. 
The main goal of Theorem~\ref{minimax_ho} is to illustrate that our oracle inequality (Theorem~\ref{hold_out}, 
from which the minimax adaptivity of Agghoo derives) 
is sharp, and that Agghoo can lead to optimal classifiers, 
in one non-trivial setting at least.

\section{Numerical experiments}\label{Sec:SimusStud}


This section illustrates the performance of Agghoo and CV on synthetic and real data. 
%

\subsection{$k$-nearest neighbors classification} \label{sec.simus.kNN}
We first consider the collection $\cG^{\mathrm{NN}} = (G_k^{\mathrm{NN}})_{k \geq 1, \, k \text{ odd}}$ 
of nearest-neighbors classifiers ---assuming $k$ is odd for avoiding ties--- 
on the following binary classification problem. 

\paragraph{Experimental setup} 
%
%
Data $(X_1,Y_1), \ldots, (X_n,Y_n)$ are independent, 
with $X_i$ uniformly distributed over $\cX = [0,1]^2$ 
and 
\[
\bP (Y_i = 1 | X_i) 
= \sigma\left(\frac{g(X_i) - b}{\lambda}\right) 
\quad \text{where} \quad 
\sigma(u) = \frac{1}{1 + e^{-u}} \, , 
\ 
g(u,v) = e^{-(u^2 + v)^3} + u^2 + v^2
\, , 
\]
$b = 1.18$ and $\lambda = 0.05$; 
the parameters have been chosen to get a challenging but feasible problem. 
%
The Bayes classifier is $\bayes: x \mapsto \un_{g(x) \geq b}$ 
and the Bayes risk, computed numerically using the scipy.integrate python library, 
is approximately equal to $0.242$. 
%
%
%
Agghoo and CV are used with the collection $\cG^{\mathrm{NN}}$ 
with training sets $T_1, \ldots, T_V$ that are chosen independently and uniformly 
among the subsets of $\{1, \ldots, n\}$ with cardinality $\floor{\tau n}$,  
for different values of $\tau$ and~$V$; 
hence, CV corresponds to what is usually called ``Monte-Carlo CV'' \citep{Arl_Cel:2010:surveyCV}. 
%
%
Each algorithm is run on $1000$  
independent samples of size $n=500$, 
and independent test samples of size $1000$ are used 
for estimating the 0--1 excess risks 
$\ell(\bayes,\ERMag{\cG^{\mathrm{NN}},\cT})$, $\ell(\bayes, \ERM{\cG^{\mathrm{NN}}, \cT}^{\mathrm{cv}} ) $ 
and the oracle excess risk $\inf_{G \in \cG^{\mathrm{NN}}} \ell(\bayes, G(D_n))$. 
%
%
Expectations of these quantities are estimated by taking an average over the $1000$ samples; 
we also compute standard deviations for these estimates, 
which are not shown on Figure~\ref{AgghooVsCV} since they are all smaller than $3.6 \%$ of the 
estimated value, so that all visible differences on the graph are significant. 

%
\paragraph{Results} are shown on Figure~\ref{AgghooVsCV}. 
%
%
%
The performance of Agghoo strongly depends on both parameters $\tau$ and $V$. 
%
%
Increasing $V$ improves significantly the performance of the resulting estimator.  
However, most of the improvement seems to occur between $V = 2$ and 
$V = 10$, so that taking $V$ much larger seems useless, 
a behaviour previously observed for CV \citep{Arl_Ler:2012:penVF:JMLR}. 
%
%
%
As a function of $\tau$, the performance of Agghoo is U-shaped although not symmetric around $1/2$. 
It seems that $\tau\in [0.5,0.8]$ yields better performances, 
while taking $\tau$ close to $0$ or $1$ should be avoided (at least for $V \leq 20$). 
Taking $V$ large enough, say $V=10$, makes the choice of $\tau$ less crucial: 
a large region of values of $\tau$ yield (almost) optimal performance. 
%
%
Similar conclusions can be drawn for CV, with the major difference that its 
performance depends much less on $V$: only $V=2$ appears to be significantly worse than $V \geq 5$. 

%
Let us now compare Agghoo with the hold-out (that is, $V=1$) and CV. 
For a given $\tau$, Agghoo and CV are much better than the hold-out; 
there is no surprise here, considering several data splits is always useful. 
For fixed $(\tau,V)$, Agghoo does significantly better than CV if $V \geq 10$, worse if $V=2$, 
and they yield similar performance for $V=5$. 
Overall, if we can afford the computational cost of $V=10$ data splits, 
Agghoo with optimized parameters ($V=10$, $\tau \in [0.5 , 0.8]$) 
clearly improves over CV with optimized parameters ($V=10$, $\tau=0.7$). 
This advocates for the use of Agghoo instead of CV, unless 
we have to take $V < 5$ for computational reasons.

\subsection{CART decision trees} \label{simu:agghoo_cart}

In a second experiment, we consider the CART classification trees of \citet{Bre_etal:1984}. 
%
%

\paragraph{Experimental setup} 
%
%
Data samples of size $n=500$ are generated, following an experiment of \citet[Section~2]{Gen_Pog_Tul:2010}: 
%
%
%
%
$Y \sim \mathcal{B}(0.5)$ and $\varepsilon \sim \mathcal{B}(0.7)$ 
are independent Bernoulli variables, 
$\cX = \R^d$ with $d \geq 6$ 
and, given $(Y,\varepsilon)$, 
the coordinates $X^{(j)}$, $j=1, \ldots, d$ of $X$ 
are independent with the following distribution. 
When $\varepsilon = 1$, 
$X^{(j)}\sim\mathcal{N}(j Y,1)$ for $j \in \{1,2,3\}$ 
and $X^{(j)} \sim \mathcal{N}(0,1)$ otherwise. 
When $\varepsilon = 1$, 
$X^{(j)}\sim\mathcal{N}((j-3) Y,1)$ for $j \in \{4,5,6\}$ 
and $X^{(j)} \sim \mathcal{N}(0,1)$ otherwise. 
We estimate numerically that the Bayes risk is $0.041$.  

%
Agghoo and 10-fold CV are used with the family $(G^{\mathrm{CART}}_{\alpha})_{\alpha >0}$ 
of pruned CART classifiers, with training sets $T_j$ chosen as in Section~\ref{sec.simus.kNN}. 
%
%
Let us detail how $G^{\mathrm{CART}}_{\alpha}$ is defined. 
For any $\alpha>0$ and any sample $D_n$, the $\alpha$-pruned CART classifier 
$G^{\mathrm{CART}}_{\alpha}(D_n)$ is defined as the partitioning classifier 
built on the tree 
\[ 
\widehat{t}_{\alpha,n} 
= \argmin_{t \subset \widehat{t}} \bigl\{ \cRh(\ERM{t}) + \alpha |\cL_t|  \bigr\}
\enspace,
\]
where $\widehat{t}$ denotes the fully grown CART tree, 
$\ERM{t}$ is the partitioning classifier associated with any tree $t$, 
$\cRh = \cRh_{\{1, \ldots, n \}}$ is the empirical risk on the whole data set $D_n$ 
and $|\cL_t|$ is the number of leaves (terminal nodes) of the tree $t$. 
%
%
%
%
%
%
%
%
We also consider random forests (RF), which are a natural competitor to Agghoo applied on pruned CART trees, 
using the R package \texttt{randomForest} \citep{Lia_Wie:2002} with default values for all parameters. 
Since RF typically build hundred of trees ---500 by default---, whereas Agghoo only requires to build around 
$V=10$ trees, we also consider RF with $\mathtt{ntree}=10$ trees (and default values for all other parameters). 
%
%
%
%
%
The rest of the experimental protocole is the same as in Section~\ref{sec.simus.kNN}, 
in particular we consider $1000$ samples of size $n=500$, 
and risks are estimated with test samples of size $1000$. 
Standard deviations of our risk estimations are not shown since they never exceed $2.5 \%$ of the excess risk. 

%

\begin{figure}
\begin{minipage}[t]{.56\linewidth}
\centering
\vspace*{-0.5cm} 
\includegraphics[width=\textwidth]{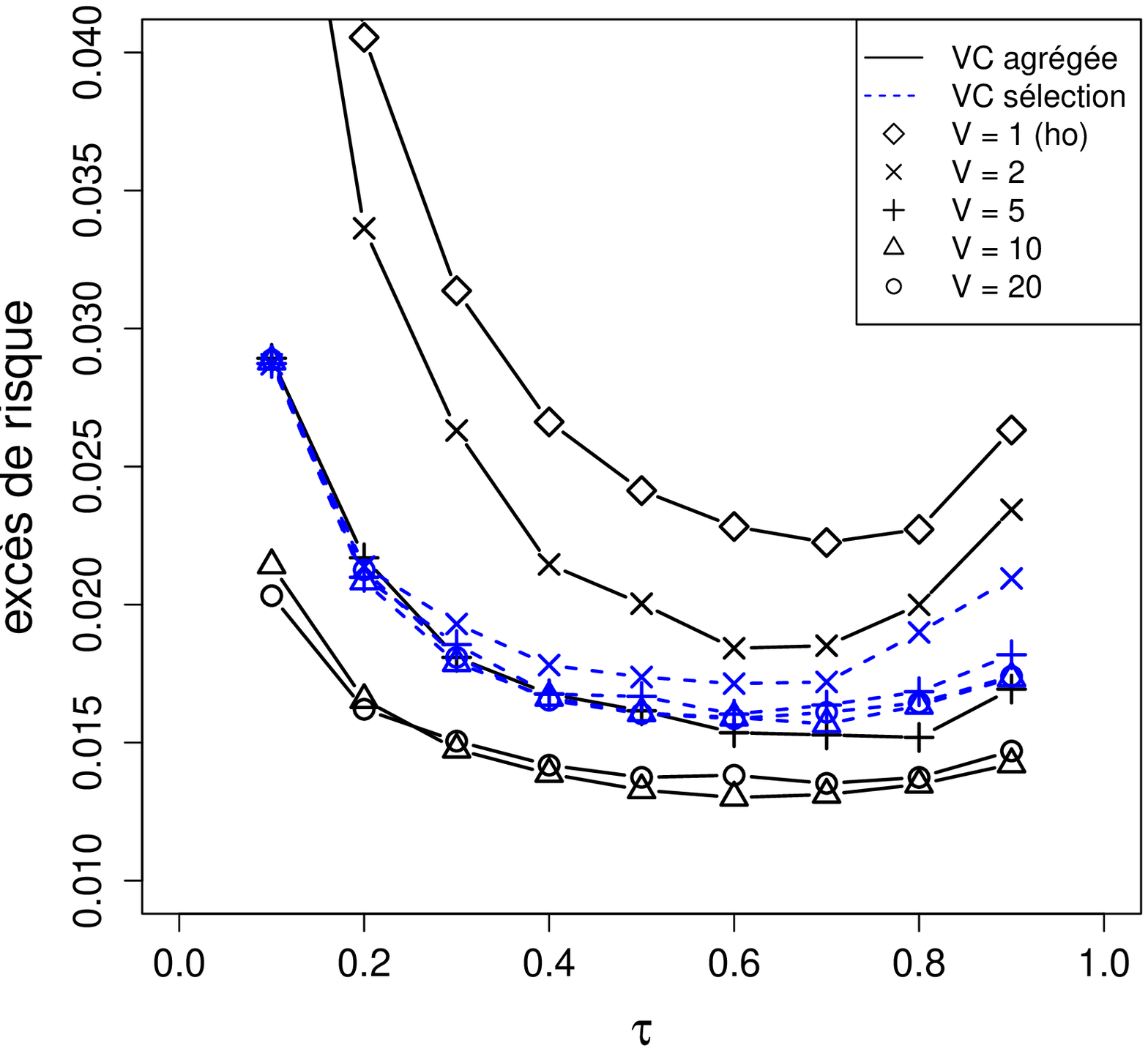} 
\vspace*{-0.5cm} 
  \caption{\label{AgghooVsCV} Classification performance of Agghoo and CV 
for the $k$-NN family; 
the oracle excess risk is $0.0034 \pm 0.0004$
     \vspace*{-0.5cm} 
}
\end{minipage}
\hspace*{.02\linewidth}
\begin{minipage}[t]{.4\linewidth}
\centering
\vspace*{-0.7cm} 
\includegraphics[scale=0.3]{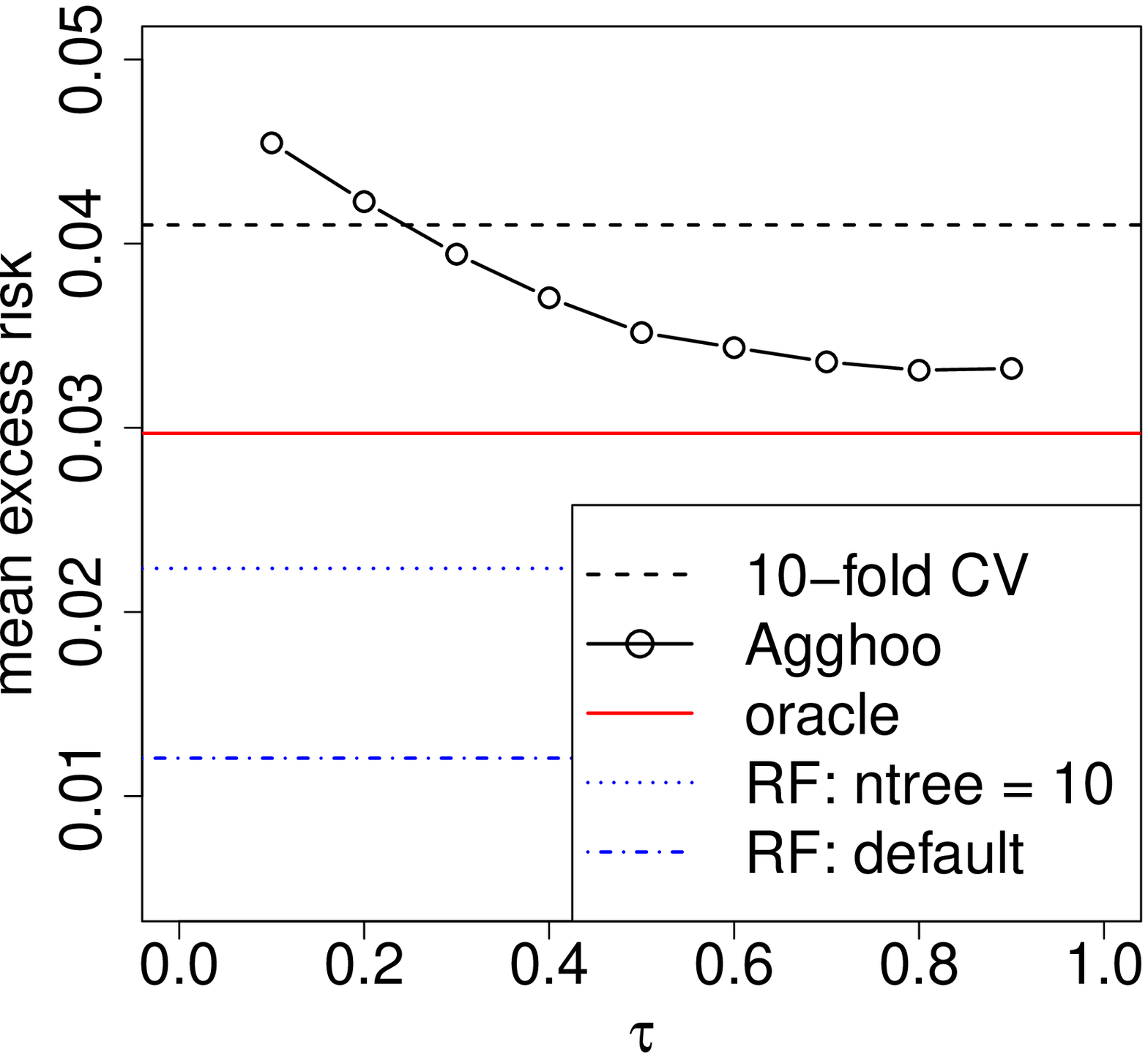}
\vspace*{-0.5cm} 
\\
\includegraphics[scale=0.3]{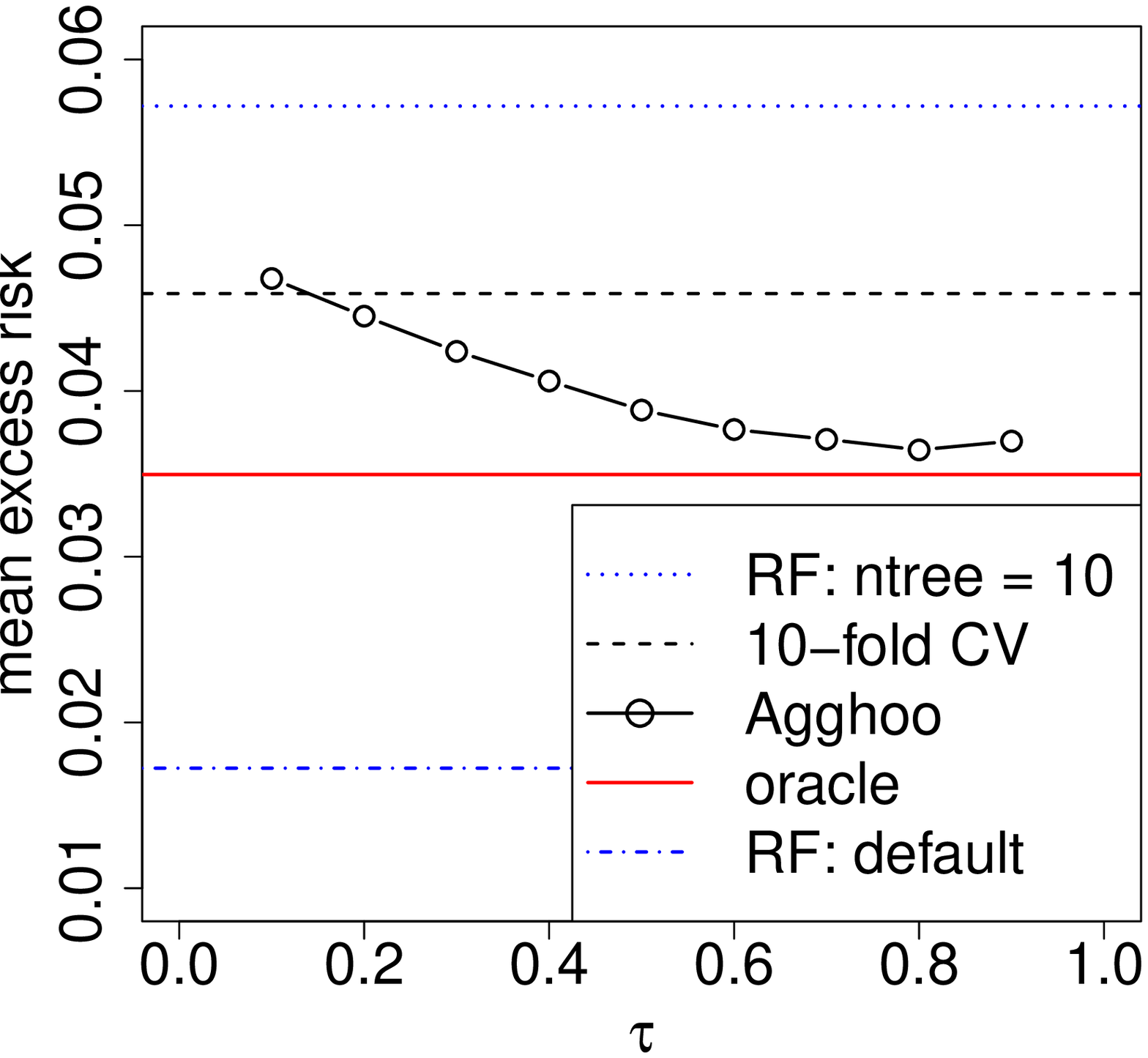}
\vspace*{-0.2cm} 
\caption{ \label{fig.cart-synth} Synthetic dataset (top: $d=7$, bottom: $d=50$); 
classification performance of Agghoo ($V=10$) and $10$-fold CV built on the CART family, 
as well as the oracle choice and RF 
}
\end{minipage}
\end{figure}
\paragraph{Results} 
are shown on Figure~\ref{fig.cart-synth}, with $d=7$ (only one noise variable) 
and $d=50$ (large majority of noise variables). 
%
%
Concerning Agghoo and CV, the conclusions are similar to the $k$-NN case, 
studied in Section~\ref{sec.simus.kNN}. 
This is why we only report here their performances for $V=10$. 
%
Let us only notice that Agghoo with $\tau = 0.8$ and $V=10$ performs almost as well as the oracle, 
which is a strong competitor since it uses the knowledge of data distribution~$P$. 
%
%
The main novelty here is the comparison to RF: 
RF with 500 trees clearly outperform all other procedures (and the oracle!), 
whereas RF with 10 trees works well only for $d=7$. 
We explain this fact by noticing that 
RF builds trees with more randomization than Agghoo 
(especially when many variables are pure noise, in the $d=50$ case), 
so that individual trees (and small forests) perform worse than hold-out selected classifiers, 
whereas a large enough forest performs much better. 
Therefore, when such a specific algorithm is available, our advice is to prefer it to Agghoo; 
but in the general case, it might be difficult to add some randomization individual classifiers, 
so that Agghoo remains useful. 
Agghoo can also be interesting when we cannot afford to aggregate more than a few tens of classifiers. 

\paragraph{Real-data experiment} 
We also consider the same procedures on the breast-cancer Wisconsin dataset from UCI 
Machine Learning Repository \citep{Lic:2013}, 
for which $n=699$ and $d=10$. 
%
We use samples of size $500$ and the remaining $199$ data as test samples. 
The split between sample and test sample is done $1000$ times, 
so that we can provide error bars for the estimated risks reported in Table~\ref{tab.CART-real}. 
%
%
\begin{table}[t]
  \caption{Results for the breast-cancer Wisconsin dataset, CART-based classifiers. }
  \label{tab.CART-real}
  \centering
  \begin{tabular}{ll}
    \toprule
    Procedure      & 0--1 risk in \%  
    \\
    \midrule
    $10$-fold CV                       & $6.66  \pm 0.06$   \\ 
    Agghoo ($V = 10$, $\tau = 0.8$)    & $5.36  \pm 0.05$  \\
    \bottomrule
  \end{tabular}
\qquad \qquad 
  \begin{tabular}{ll}
    \toprule
    Procedure      & 0--1 risk in \%  
    \\
    \midrule
    Oracle                             & $5.08 \pm 0.04$ \\
    RF  ($\mathtt{ntree} = 10$)        & $3.68 \pm 0.04$ \\
    RF  ($\mathtt{ntree} = 500$)       & $3.01 \pm 0.03$ \\
    \bottomrule
  \end{tabular}
%
%
\end{table}
The conclusion is similar to what has been obtained on synthetic data with $d=7$, 
which confirms our interpretation: 
when most variables are relevant, RF's individual trees are less randomized 
so that RF can perform well even with a small number of trees.

\section{Discussion} \label{sec.concl}

%
The theoretical and numerical results of the paper 
show that Agghoo can be used safely in supervised classification with the 0--1 loss, 
at least when its parameters are properly chosen ---$V \geq 10$ and $\tau \in [0.5 , 0.8]$ 
seem to be safe choices. 
On the theoretical side, Agghoo performs at least as well as the hold-out 
and it satisfies some sharp oracle inequality. 
Experiments show that Agghoo actually performs much better, 
improving over cross-validation except when the number of data splits 
is strictly smaller than~$5$, 
especially when basis classifiers are unstable ---like decision trees. 
Proving theoretically that Agghoo can improve over CV is an exciting, but challenging, open problem 
that we would like to address in future works. 

%
So, for the same computational cost, 
Agghoo ---with properly chosen parameters $V,\tau$--- should be preferred to CV, 
unless the final classifier has to be interpretable (which makes selection better than aggregation). 
Yet, Agghoo and CV only are general-purpose tools. 
For some specific families of classifiers, better procedures can be used. 
For instance, when one wants a classifier built upon decision trees, 
random forests ---with sufficiently many trees if there are many irrelevant variables--- 
certainly are a better choice, as shown by our experiments. 

%
Our results can be extended in several ways. 
%
%
First, our theoretical bounds directly apply to subagging hold-out, 
since it also makes a majority vote among hold-out selected estimators. 
The difference is that in subagging the training set size is $n-p-q$ and the validation set size is $q$, 
for some $q \in \{1, \ldots, n-p-1\}$, 
leading to slightly worse bounds than the ones we obtained for Agghoo (not much if $q$ is well chosen). 
Oracle inequalities and minimax results can also be obtained for Agghoo in other settings; 
this paper focuses on two such results for length reasons only. 
%
%
%
Second, Agghoo can be extended to other learning problems, such as regression and density estimation, 
replacing the majority vote by an average. 
Proposition~\ref{Prop:AgBetterThanHO} then holds true with $M$ replaced by $1$ if the loss is convex 
---for instance, for the least-squares loss---, by Jensen's inequality. 
Then, Theorem~\ref{hold_out} could be proved based upon 
the general results of \citet[Section~8.5]{Mas:2003:St-Flour}. 
%
%
%
Third, Agghoo might be improved by weighting the votes of the different hold-out classifiers, 
as suggested by \citet{Jun:2016}.

{\small 
\bibliographystyle{abbrvnat}
\bibliography{biblio_nips2017}
}

\newpage

\appendix

\section{Proof of Proposition \ref{Prop:AgBetterThanHO}}
The proof immediately follows from the following convexity-type property of the majority vote, 
applied to $(\ERMho{\cG, T_j})_{1 \leq j \leq V}$, 
since $\bE \bigl[ \ell(\bayes,\ERMho{\cG, T_j}) \bigr] $ and $\bE \bigl[ R(\ERMho{\cG, T_j}) \bigr]$ 
do not depend on $j$ under assumptions \eqref{hyp.T-ind}--\eqref{hyp.T-p} 
(they only depend on $T_j$ through its cardinality $n-p$). 
\begin{proposition}
\label{maj_vote}
Let $(\ERM{i})_{1 \leq i \leq V}$ denote a finite family of classifiers and let 
$\ERM{}^{\text{mv}}$ be some majority vote rule: 
$\forall x \in \cX$, $\ERM{}^{\text{mv}}(x) \in \argmax_{m \in \cY} \lvert \{i \in [V] : \ERM{i}(x) = m \} \rvert $. 
%
Then, 
\[ 
\ell(\bayes,\ERM{}^{\text{mv}}) 
\leq \frac{M}{V} \sum_{i = 1}^V \ell(\bayes,\ERM{i})
\qquad \text{and}\qquad 
R(\ERM{}^{\text{mv}}) \leq  \frac{2}{V} \sum_{i = 1}^V R(\ERM{i}) 
\enspace.\]
\end{proposition}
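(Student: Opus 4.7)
The plan is to establish both inequalities \emph{pointwise} in $x \in \cX$ (resp.\ $(x,y) \in \cX \times \cY$) and then integrate against $P$. Writing $\eta_y(x) := \bP(Y = y \mid X = x)$, the pointwise excess risk of a classifier $f$ at $x$ is $\eta_{\bayes(x)}(x) - \eta_{f(x)}(x)$, which is non-negative since $\bayes(x) \in \argmax_y \eta_y(x)$; the pointwise risk is $1 - \eta_{f(x)}(x)$. This reformulation is the key reduction: each inequality will follow from a deterministic combinatorial statement about the $V$ labels $\ERM{i}(x) \in \cY$, $i=1,\dots,V$.

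For the first bound I would fix $x$, set $y_0 := \ERM{}^{\text{mv}}(x)$, and count $V_y := |\{i : \ERM{i}(x) = y\}|$ for each $y \in \cY$. Since $y_0$ wins the vote and $\sum_{y \in \cY} V_y = V$ with $|\cY| = M$, a pigeonhole argument gives $V_{y_0} = \max_y V_y \geq V/M$. The crucial observation is that every summand $\eta_{\bayes(x)}(x) - \eta_{\ERM{i}(x)}(x)$ is non-negative, so I may discard all indices with $\ERM{i}(x) \neq y_0$ and still obtain
\[
\sum_{i=1}^V \bigl( \eta_{\bayes(x)}(x) - \eta_{\ERM{i}(x)}(x) \bigr)
\;\geq\; V_{y_0} \bigl( \eta_{\bayes(x)}(x) - \eta_{y_0}(x) \bigr)
\;\geq\; \frac{V}{M} \bigl( \eta_{\bayes(x)}(x) - \eta_{\ERM{}^{\text{mv}}(x)}(x) \bigr).
\]
Rearranging and integrating over $X \sim P$ yields the factor-$M$ excess-risk inequality.

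For the second bound I work directly with the $0$--$1$ loss. Fix $(x,y)$; if $\ERM{}^{\text{mv}}(x) = y$ there is nothing to prove, so assume $y_0 := \ERM{}^{\text{mv}}(x) \neq y$. The majority-vote definition forces $V_y \leq V_{y_0}$, and together with $V_y + V_{y_0} \leq V$ this gives $V_y \leq V/2$, i.e.\ $|\{i : \ERM{i}(x) \neq y\}| = V - V_y \geq V/2$. Hence $\un_{\ERM{}^{\text{mv}}(x) \neq y} \leq \frac{2}{V} \sum_{i=1}^V \un_{\ERM{i}(x) \neq y}$, an inequality that also holds trivially when $\ERM{}^{\text{mv}}(x) = y$. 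Taking the expectation under $(X,Y) \sim P$ then delivers the risk bound.

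There is no real obstacle: the only delicate step is the excess-risk argument, where one must carefully exploit the non-negativity of each pointwise contribution to lower-bound the sum by the mass concentrated on the winning class $y_0$. Once this is recognized, both statements reduce to elementary counting followed by integration, and Proposition~\ref{Prop:AgBetterThanHO} then follows by applying Proposition~\ref{maj_vote} to the classifiers $(\ERMho{\cG, T_j})_{1 \leq j \leq V}$ and using \eqref{hyp.T-ind}--\eqref{hyp.T-p} to conclude that $\bE[\ell(\bayes,\ERMho{\cG, T_j})]$ and $\bE[R(\ERMho{\cG, T_j})]$ are independent of $j$.
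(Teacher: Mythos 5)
Your proposal is correct and follows essentially the same route as the paper's proof: the same pointwise reduction via $\eta_y(x)$, the same pigeonhole bound $V_{y_0}\geq V/M$ combined with discarding the non-negative terms for losing classes, and the same counting argument $V_y\leq V/2$ for the $0$--$1$ risk bound. No gaps.
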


\begin{proof}
For any $y \in \cY$, define $\eta_y: x \rightarrow \mathbb{P}[Y = y | X = x]$. 
Then, for any $f \in \cF$, $R(f) = \bE [ 1 - \eta_{f(X)} (X) ]$ hence 
$\bayes(X) \in \argmax_{y \in \cY} \eta_y(X) $ 
and 
\[ 
\ell(\bayes,f) 
= \bE \Bigl[ \max_{y \in \cY} \eta_y(X) - \eta_{f(X)}(X) \Bigr] 
= \bE \bigl[ \eta_{\bayes(X)}(X) - \eta_{f(X)}(X) \bigr] 
\enspace. 
\]
We now fix some $x \in \cX$ and define $\mathcal{C}_x(y) = \{i \in [V]: \ERM{i}(x) = y \}$ 
and $C_x = \max_{y \in \cY} |\mathcal{C}_x(y)|$.
Since $C_x M\ge \sum_{y \in \cY} |\mathcal{C}_x(y)| = V $,
it holds $C_x \geq V/M$.
On the other hand, by definition of $\ERM{}^{\text{mv}}$,
\begin{align*}
 \frac{1}{V} \sum_{i = 1}^V \bigl[ \underbrace{ \eta_{\bayes(x)}(x) - \eta_{\ERM{i}(x)}(x) }_{\geq 0} \bigr]
 &\geq \frac{C_x}{V} \bigl( \eta_{\bayes(x)}(x) - \eta_{\ERM{}^{\text{mv}}(x)}(x) \bigr) 
 \geq \frac{1}{M} \bigl( \eta_{\bayes(x)}(x) - \eta_{\ERM{}^{\text{mv}}(x)}(x) \bigr) 
\enspace.
\end{align*}
Integrating over $x$ (with respect to the distribution of $X$) 
yields the first bound.

For the second bound, 
fix $x \in \cX$ and define $\mathcal{C}_x(y)$ and $C_x$ as above. 
Let $y \in \cY$ be such that $\ERM{}^{\text{mv}}(x) \neq y$. 
%
%
Since $y$ occurs less often than 
$\ERM{}^{\text{mv}}(x)$ 
among $\ERM{1}(x) , \ldots, \ERM{V}(x)$, 
we have $|\mathcal{C}_x(y)| \leq V/2$. 
Therefore,
\[
\frac{1}{V} \sum_{i = 1}^V \un_{\{ \ERM{i}(x) \neq y \} }
= \frac{V - |\mathcal{C}_x(y)|}{V} 
\geq \frac{1}{2} 
\enspace.\]
Thus
\[ 
\ERM{}^{\text{mv}}(x) \neq y 
\implies  
\frac{1}{V} \sum_{i = 1}^V \un_{\{ \ERM{i}(x) \neq y \} }
\geq \frac{1}{2}
\enspace.\]
Hence, for any $y \in \cY$, 
\[ 
\un_{\{\ERM{}^{\text{mv}}(x) \neq y \} }
\leq \frac{2}{V} 
\sum_{i = 1}^V \un_{\{ \ERM{i}(x) \neq y \} }
\enspace.\]
Taking expectations with respect to $(x,y)$ yields $R(\ERM{}^{\text{mv}} ) \leq 2 V^{-1} \sum_{i = 1}^V R(\ERM{i})$.
\end{proof}

\section{Proof of Theorem \ref{hold_out}}
The proof relies on a result by \citet[Eq.~(8.60), which is itself a consequence of Corollary~8.8]{Mas:2003:St-Flour}, which holds true as soon as 
\begin{equation} \label{eq.hyp-Eq-8.60-Massart}
\forall f \in \cF, \qquad 
\Var \bigl( \un_{\{ f(X) \neq Y \}} - \un_{ \{\bayes(X) \neq Y\} } \bigr) 
\leq  \Bigl[ w \bigl(\sqrt{\ell(\bayes,f)} \bigr) \Bigr]^2 
\end{equation}
for some nonnegative and nondecreasing continuous function $w$ on $\R^+$, 
such that $x \mapsto w(x)/x$ is nonincreasing on $(0,+\infty)$ and $w(1) \geq 1$. 

Let us first prove that assumption \eqref{eq.hyp-Eq-8.60-Massart} holds true. 
On one hand, for any $f \in \cF$, 
\begin{multline}
\label{eq.maj-Var-incr}
\Var \bigl(\un_{\{  f(X) \neq Y\} } - \un_{ \{\bayes(X) \neq Y\} } \bigr)
\leq
\bE[|  \un_{ \{f(X) \neq Y\} } - \un_{ \{\bayes(X) \neq Y \}} |^2] \\
= \bE[ \un_{ \{f(X) \neq \bayes(X) \}} ] 
= \bE \bigl[ \lvert f(X) - \bayes(X)|]
\enspace.
\end{multline}
On the other hand, since we consider binary classification with the 0--1 loss, 
for any $f \in \cF$ and $h>0$, 
\begin{align*} 
\ell(\bayes, f) 
&= \bE \bigl[ |2\eta(X) - 1| \cdot |f(X) - \bayes(X)| \bigr] 
\qquad  &\text{by \citet[Theorem~2.2]{Dev_Gyo_Lug:1996}} 
\\
&\geq h \bE \bigl[ |f(X) - \bayes(X)| \un_{\{|2\eta(X) - 1| \geq h\}} \bigr] 
\qquad  &
\\
&\geq h \bE \bigl[ |f(X) - \bayes(X)|  - \un_{\{|2\eta(X) - 1| < h\}} \bigr] 
\qquad  &\text{since } |f-f^*| \leq 1
\\
&\geq h \bE \bigl[ |f(X) - \bayes(X)| \bigr] - c h^{\beta+1}
\qquad  &\text{by \eqref{hyp.MA}.} 
\end{align*}
This lower bound is maximized by 
taking 
\[ 
h = h_* := \left( \frac{ \bE \bigl[ |f(X) - \bayes(X)| \bigr] } { c(\beta+1) } \right)^{\frac{1}{\beta}} 
\, , 
\]
which belongs to $[0,1]$ since 
$c \geq 1$  
and $\bE \bigl[ |f(X) - \bayes(X)| \bigr] \leq 1$. 
Thus, we obtain  
\begin{align*} 
\ell(\bayes, f) 
\geq 
h_* \frac{\beta}{\beta+1} \bE \bigl[ |f(X) - \bayes(X)| \bigr]
= \frac{\beta}{(\beta+1)^{(\beta+1)/\beta}  c^{1/\beta}} \bE \bigl[ |f(X) - \bayes(X)| \bigr]^{(\beta+1)/\beta} 
\end{align*}
hence Eq.~\eqref{eq.maj-Var-incr} leads to 
\begin{align*}
\Var \bigl( \un_{ \{f(X) \neq Y \}} - \un_{ \{\bayes(X) \neq Y\} } \bigr)
\leq
\bE \bigl[ |f(X) - \bayes(X)| \bigr] 
&\leq 
\frac{\beta+1}{\beta^{\beta/(\beta+1)}} c ^{\frac{1}{\beta + 1}} \ell(\bayes,f)^{\frac{\beta}{\beta + 1}} 
\leq 2c ^{\frac{1}{\beta + 1}}\ell(\bayes,f)^{\frac{\beta}{\beta + 1}} 
\enspace.
\end{align*}
Therefore, Eq.~\eqref{eq.hyp-Eq-8.60-Massart} holds true with 
$w(u) = \sqrt{c_1} u^\frac{\beta}{\beta + 1}$ and $c_1=2c ^{\frac{1}{\beta + 1}}$, 
which statisfies the required conditions.
So, by \citet[Eq.~(8.60)]{Mas:2003:St-Flour}, for any $\theta \in (0,1)$, 
\begin{equation} \label{eq:ho_oracle}
\bE \bigl[ \ell(\bayes,\ERMho{\cG, T}) \,\vert\, D_n^T \bigr] 
\leq \frac{1+\theta}{1-\theta} \inf_{G \in \cG} \ell(\bayes,\ERM{G,T}) + \frac{\delta_*^2}{1-\theta} 
\left[ 2\theta + \log(e |\cG|)\left(\frac{1}{3} + \theta^{-1} \right) \right] 
\end{equation} 
where $\delta_*$ is the positive solution of the fixed-point equation 
$w(\delta_*) = \sqrt{p} \delta_*^2$, that is 
$\delta_*^2 = ( c_1 / p )^\frac{\beta + 1}{\beta + 2} 
$. 
%
Taking expectations with respect to the training data $D_n^T$, 
we obtain 
\begin{align*}
\bE \bigl[ \ell(\bayes,\ERMho{\cG, T}) \bigr] 
&\leq 
\frac{1+\theta}{1 - \theta} \bE\left[ \inf_{G \in \mathcal{G}} \ell(\bayes,\ERM{G,T}) \right] +\frac{2 c^\frac{1}{\beta + 2}}{1-\theta} \frac{2\theta + \log(e |\cG|)\left(\frac{1}{3} + \theta^{-1} \right)}{p^\frac{\beta + 1}{\beta + 2} }
\enspace.
\end{align*}
Now, by Proposition~\ref{Prop:AgBetterThanHO}, 
\[ 
\bE \bigl[ \ell(\bayes,\ERMag{\cG, \cT}) \bigr] 
\leq 2 \bE \bigl[ \ell(\bayes,\ERMho{\cG, T_1}) \bigr] 
\leq 2\frac{1+\theta}{1 - \theta} \bE \left[ \inf_{G \in \mathcal{G}} \ell(\bayes,\ERM{G,T}) \right] 
  + \frac{4 c^\frac{1}{\beta + 2}}{1-\theta} \frac{2\theta + \log(e |\cG|)\left(\frac{1}{3} + \theta^{-1} \right)}{p^\frac{\beta + 1}{\beta + 2} } 
\enspace.
\]
Taking $\theta = 1/5$ leads to the result. 
\qed

\section{Proofs of minimax results}

\subsection{Formal framework and result}

To begin with, let us recall the definition of minimax optimality. 
 \begin{definition}
 Let $\mathcal{P}$ be a class of probability distributions over $\cX \times \cY$.
 %
  A classifier $\ERM{n}$ is said to be minimax over the class $\mathcal{P}$ when
 its maximal excess loss on $\mathcal{P}$ satisfies
 \[ \sup_{P \in \mathcal{P}} \bE_{P} \left[ \ell(\bayes_P, \ERM{n}) \right] \le \kappa(\cP)\inf_{\ClassRule } \sup_{P \in \mathcal{P}} 
 \bE_P\left[\ell(\bayes_P,\ClassRule(D_n)))\right] \enspace.\]
 Any sequence $u_n$ such that $u_n^{-1}\inf_{G} \sup_{P \in \mathcal{P}} 
 \bE_P\left[\ell(\bayes_P,\ClassRule(D_n))\right]$ is bounded and bounded away from $0$ is called a minimax rate of convergence. 
 \end{definition}

We now define formally $\Sigma_{\gamma,L}$. 

\begin{definition}\label{Def:GammaHolder}
 For any $k = (k_1,...,k_d) \in \mathbb{N}^d$, any $x=(x_1,\ldots,x_d) \in \mathbb{R}^d$ and any sufficiently smooth function $f:\bR^d\to\bR$, 
 let 
 \begin{align*}
  |k|= \sum_{i = 1}^d k_i,\quad  k! = \prod_{i=1}^d k_i! ,\quad D^k f(x) = \frac{\partial^{|k|}}{\partial x_1^{k_1} ... \partial x_d^{k_d}} f(x_1,...,x_d),\quad x^k = \prod_{i = 1}^d x_i^{k_i}\enspace.
 \end{align*}
 For any $\alpha \in \mathbb{N}$, any $\cC^\alpha$-function $f$ and any point $x\in\bR^d$, let
 \[ Q_{x}^\alpha f(y) = \sum_{k:|k| \leq \alpha} \frac{D^k f(x)}{k!}(y-x)^k \enspace.\]
 The class of $\gamma$-smooth functions with constant $L$ is defined as
 \[\Sigma_{\gamma,L}(\mathbb{R}^d) = \left\{ f \in \cC^{\floor{\gamma}}\left( \bR^d, \bR \right): \quad
  \forall x,y\in\bR^d,\quad  |f(y) - Q_x^{\floor{\gamma}}(y)| \leq L \Norm{y-x}^\gamma \right\} \enspace.\]
 \end{definition}
 Let us conclude this section with the definition of the minimax classification rules.
 
 Estimators achieving the minimax rates over the classes $\mathcal{P}_{\gamma,L,\beta,c}$ were obtained by \citet{Aud_Tsy:2007}. Let us recall this construction.
 
 \begin{definition}\label{def:MinMaxRule}
Let $\ell\in\bN$, $h>0$ and let $K$ denote the standard gaussian Kernel. For any polynomial $Q$, let
\[
\cC_{h,x}(Q) = \sum_{i = 1}^n (Y_i - Q(X_i - x))^2 K \left(\frac{X_i - x}{h} \right)
\enspace.  
\]
Denote by
\[ 
\widehat{Q}_{\ell,h,x} 
= \argmin_{Q: \deg Q \leq \ell} \mathcal{C}_{h,x}(Q) 
\]
  if the minimum exists and is unique and $\widehat{Q}_{\ell,h,x}=0$ otherwise.

Let also $\bar{B}$ be the matrix $(B_{s_1,s_2})_{|s_1|,|s_2| \leq \floor{\gamma}}$
where
\[
B_{s_1,s_2} 
= \frac{1}{n h^d} \sum_{i = 1}^n \left( \frac{X_i - x}{h} \right)^{s_1 + s_2} K \left( \frac{X_i - x}{h} \right)
\enspace. 
\]
Let $\lambda_{min} 
= \sup \{ \lambda \in \mathbb{R} \,\vert\, 
\forall u \in \mathbb{R}^d \Norm{\bar{B} u} \geq \lambda \Norm{u} \} $. 
The local polynomial rule with degree $\ell$ and bandwidth $h$ is defined by 
$G^{\mathrm{LP}}_{\ell,h,x} (D_n): x\mapsto \un_{\left\{ \widehat{\eta}_{\ell,h}(x) \geq \frac{1}{2} \right\}} $, 
where
\begin{align*} 
\widehat{\eta}_{\ell,h}(x) 
&= 
\begin{cases}
\widehat{Q}_{\ell,h,x}(0) &\text{ if } \lambda_{min} \geq (\log n)^{-1}  \\
 0 &\text{ otherwise} 
\end{cases}
\enspace.
  \end{align*}
 \end{definition}

\begin{theorem}
\label{minimax} 
The rate $u_n = n^{- \frac{\gamma (1 + \beta)}{2\gamma + d}}$ 
is minimax over the classes $\mathcal{P}_{\gamma,L,\beta,c}$, 
for any $\beta \ge 0$, $\gamma > 0$, $L > 0$, $c \ge 1$ and $\gamma \beta < d$.
  
Furthermore, if $n^{\frac{1}{2\gamma + d}}h_n\in[\tau,\tau'] $ for some $\tau'>\tau>0$,
the estimator $G^{\mathrm{LP}}_{\floor{\gamma},h_n}(D_n)$
is minimax over $\mathcal{P}_{\gamma,L,\beta,c}$ for any $L > 0,c > 0, \gamma \beta < d$.
\end{theorem}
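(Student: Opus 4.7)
Since the statement is essentially \citet[Theorem~3.5]{Aud_Tsy:2007}, the cleanest proof is to invoke that reference. For a self-contained sketch, the plan splits naturally into an upper bound (achieved by $G^{\mathrm{LP}}_{\floor{\gamma},h_n}$) and a matching lower bound, each relying on standard techniques.

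For the upper bound, I would first obtain a pointwise control of $\widehat{\eta}_{\floor{\gamma},h_n}(x) - \eta(x)$ on the event $\{\lambda_{\min} \geq (\log n)^{-1}\}$. On this event $\widehat{Q}_{\floor{\gamma},h_n,x}$ is a weighted least-squares polynomial fit, so a Taylor expansion combined with $\eta \in \Sigma_{\gamma,L}$ yields a bias of order $h_n^\gamma$, while Bernstein-type concentration for weighted sums of bounded independent variables gives a stochastic term of order $(n h_n^d)^{-1/2}$. The lower bound $q \geq L^{-1} \un_{[0,1]^d}$ on the design density guarantees, via standard matrix concentration, that the complementary event has probability decaying faster than any polynomial. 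Then I would invoke the plug-in lemma of \citet[Lemma~5.2]{Aud_Tsy:2007}: under \eqref{hyp.MA},
\[
\ell(\bayes, \widehat{g}) \leq C(c,\beta) \, \bE \bigl[ \lvert \widehat{\eta}(X) - \eta(X) \rvert^{\beta+1} \bigr]
\]
for any plug-in classifier $\widehat{g}(x) = \un_{\{\widehat{\eta}(x) \geq 1/2\}}$. Taking $(\beta+1)$-moments of the pointwise error and integrating against $q$ produces the rate $n^{-\gamma(\beta+1)/(2\gamma+d)}$, since both the bias and variance contributions balance to $h_n^{\gamma(\beta+1)}$ and $(nh_n^d)^{-(\beta+1)/2}$ when $n^{1/(2\gamma+d)} h_n \asymp 1$.

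For the lower bound, I would apply Assouad's cube method. The plan is to build a family $(P_\sigma)_{\sigma \in \{0,1\}^m}$ with $m \asymp h^{-d}$ for a well-chosen $h$: partition a cube inside $[0,1]^d$ into sub-cubes of side $h$, pick a smooth bump $\phi$ supported on a small central region of each, and set $\eta_\sigma = 1/2 + \varepsilon_\sigma h^\gamma \phi(\cdot/h)$ with $\varepsilon_\sigma \in \{\pm 1\}$ on each sub-cube. Membership in $\Sigma_{\gamma,L}$ follows from the choice of scale, and \eqref{hyp.MA} is ensured by letting the bumps occupy only a fraction of order $h^{d - \gamma\beta}$ of each sub-cube, which is feasible precisely when $\gamma\beta < d$. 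One then checks that neighbouring $P_\sigma$ are at KL-distance $\lesssim n h^{d+2\gamma}$, while classifiers associated to neighbouring $\sigma$'s differ in excess risk by $\gtrsim h^{\gamma(1+\beta)}$ per sub-cube. Balancing these bounds gives $h \asymp n^{-1/(2\gamma+d)}$ and the announced rate.

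The main obstacle is the lower bound: one must simultaneously satisfy the H\"older smoothness, the margin condition with the prescribed $\beta$, and the KL bound needed for Assouad to yield the full exponent $\gamma(1+\beta)/(2\gamma+d)$ rather than the weaker $\gamma/(2\gamma+d)$. The constraint $\gamma\beta < d$ surfaces precisely as the feasibility condition that makes these three requirements coexist, and getting the geometry of the bumps right is the delicate step. Everything else is technical bookkeeping that can be read off from Section~3 of \citet{Aud_Tsy:2007}.
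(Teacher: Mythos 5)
Your overall route coincides with the paper's: the lower bound is taken verbatim from \citet[Theorem~3.5]{Aud_Tsy:2007}, and the upper bound combines a pointwise exponential concentration bound for $\widehat{\eta}_{\floor{\gamma},h_n}$ (which the paper simply cites as Eq.~(3.7) of their Theorem~3.2 rather than re-deriving via Taylor expansion and Bernstein) with a plug-in conversion under the margin assumption. The only actual work in the paper's proof is the observation that the bandwidth constraint $n^{1/(2\gamma+d)}h_n \in [\tau,\tau']$ lets one extend the concentration bound from $\delta \geq C_3 h_n^{\gamma}$ to all $\delta > 0$ (for small $\delta$ the right-hand side is bounded below by a constant), so that \citet[Lemma~3.1]{Aud_Tsy:2007} applies.

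One step of your self-contained sketch would fail as literally written: the conversion $\ell(\bayes,\widehat{g}) \leq C(c,\beta)\,\bE[\lvert\widehat{\eta}(X)-\eta(X)\rvert^{\beta+1}]$ is not a valid lemma under \eqref{hyp.MA} alone. Starting from $\ell(\bayes,\widehat{g}) \leq 2\,\bE[\lvert\widehat{\eta}-\eta\rvert\,\un_{\{\lvert 2\eta-1\rvert \leq 2\lvert\widehat{\eta}-\eta\rvert\}}]$, a peeling argument that uses only the $(\beta+1)$-th moment of the error (via Markov) yields an exponent strictly smaller than $\beta+1$ in the deviation scale whenever $\beta>0$, hence a strictly slower rate than $n^{-\gamma(1+\beta)/(2\gamma+d)}$. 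The fast exponent genuinely requires feeding the pointwise sub-Gaussian tails into the conversion, which is exactly what \citet[Lemma~3.1]{Aud_Tsy:2007} does. Since your sketch does establish those exponential tails, the fix is only to replace the moment-based conversion by the tail-based one. Your Assouad outline for the lower bound is a fair description of what the cited Theorem~3.5 proves; the paper does not reprove it either.
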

\begin{proof}
The lower bound is proved in \cite[Theorem 3.5]{Aud_Tsy:2007}.
\\
For the upper bound, we shall denote by $C_1,C_2,C_3$ functions of the parameters $\gamma,L,a,b,d$ but neither $h$ nor $n$ which may vary from line to line.
Eq~(3.7) in \cite[Theorem 3.2]{Aud_Tsy:2007} implies that
\[ 
\sup_{P \in \mathcal{P}_{\gamma,L,\beta,c}} P(|\widehat{\eta}_{\floor{\gamma},h}(x) - \eta(x)| \geq \delta)  
\leq C_1 \exp (-C_2 nh^d \delta^2)
\]
for $h, \delta$ 
such that $0 < h \leq L^2$ and $C_3 h^{\gamma} \leq \delta $.

Then our choice of $h_n$ yields,
for any $\delta \ge C_3 h_n^{\gamma}$,
\begin{equation}\label{eq:Goal}
 \sup_{P \in \mathcal{P}_{\gamma,L,\beta,c}} P(|\widehat{\eta}_{\floor{\gamma},h_n}(x) - \eta(x)| \geq \delta)  
 \leq C_1 \exp (-C_2 n^{\frac{2\gamma}{2\gamma + d}} \delta^2) 
 \enspace.
\end{equation}
Moreover, for $\delta < C_3 h_n^{\gamma}$, 
\[ 
n h_n^d \delta^2 
\leq C_3 n h_n^{2\gamma + d} 
\leq C_3 b^{2\gamma + d}
\enspace,
\]
therefore, inequality \eqref{eq:Goal} holds for all $\delta>0$ and \cite[Lemma~3.1]{Aud_Tsy:2007} concludes the proof.
%
%
\end{proof}

\subsection{Proof of Theorem \ref{minimax_ho}}
Let $k_n = \floor{n^{\frac{1}{2\gamma + d}}}$ and $h_n = 1/k_n$.
Then for $n > d$, the risk of the oracle is bounded from above by
\[ 
\bE \left[ \inf_{G \in \cG^{\mathrm{LP}}_n} \ell(\bayes,\ERM{G,T_1}) \right] 
\leq 
\inf_{G \in \cG^{\mathrm{LP}}_n} \bE[\ell(\bayes, \ERM{G,T_1})] 
\leq \bE \bigl[ \ell(\bayes, \ERM{G^{\mathrm{LP}}_{\floor{\gamma},h_n},T_1}) \bigr] 
\enspace.
\]
Moreover, 
\[ 
1=\floor{n^{\frac{1}{2\gamma + d}}}h_n\le n^{\frac{1}{2\gamma + d}}h_n\le (\floor{n^{\frac{1}{2\gamma + d}}}+1)h_n\le 1+h_n\le 2\enspace. 
\]
Therefore $1 \leq n^{\frac{1}{2\gamma + d}}h_n \leq 2$
and, by Theorem~\ref{minimax} and the fact that $n-p \geq \tau n-1\ge \tau n/2$ for $n \geq 2/\tau$, 
there exist constants $C_{\cdot}$ depending on various parameters indicated in subscript, but not on $n$ such that
\[ 
\bE[\ell(\bayes, \ERM{G^{\mathrm{LP}}_{\floor{\gamma},h_n},T_1})] 
\leq C_{\gamma,L,\beta,c}(n-p_n)^{- \frac{\gamma(1 + \beta)}{2\gamma + d}}
\leq C_{\gamma,L,\beta,c,\tau}n^{- \frac{\gamma(1 + \beta)}{2\gamma + d}} 
\enspace.  
\]
By Theorem~\ref{hold_out}, there exists a constant $C_{c,\beta}$ such that
\begin{align*}
 \bE \bigl[ \ell(\bayes, \ERMag{\cG^{\mathrm{LP}}_n , \cT_n}) \bigr] 
 &\leq C_{c,\beta} \left(\bE \bigl[ \ell(\bayes, \ERM{G^{\mathrm{LP}}_{\floor{\gamma},h_n},T_1}) \bigr] 
 	+  \frac{\log(e|\cG^{\mathrm{LP}}_n|)}{p_n^{\frac{\beta + 1}{\beta + 2}}} \right)
 \enspace.
\end{align*}
Since $p_n \geq (1-\tau)n$ and $|\cG^{\mathrm{LP}}_n| = n^{2}$, it follows that
\begin{equation}\label{eq:UpperBoundERMinimax}
\bE \bigl[ \ell(\bayes, \ERMag{\cG^{\mathrm{LP}}_n , \cT_n}) \bigr] 
\leq C_{\gamma,L,\beta,c,\tau} n^{- \frac{\gamma(1 + \beta)}{2\gamma + d}} 
 	+ C_{c,\beta}\frac{\log n}{n^{\frac{\beta + 1}{\beta + 2}}}
\enspace .
\end{equation}
Finally, since $\gamma \beta < d$, $(\beta + 1)/(\beta + 2) > [\gamma(1+\beta)]/[2\gamma + d]$, and therefore 
\[ 
\frac{\log n}{n^{\frac{\beta + 1}{\beta + 2}}} 
\leq C_{\beta,\gamma,d}n^{- \frac{\gamma(1 + \beta)}{2\gamma + d}}
\enspace . 
\]
It follows from Eq.~\eqref{eq:UpperBoundERMinimax} that the Agghoo estimator is minimax. 
\qed 


\end{document}